\theoremstyle{change}
\newcommand{\Z}{{\mathbb Z}}
\newcommand{\C}{{\mathbb C}}
\newcommand{\Fq}{{\mathbb F}_q}
\newcommand{\GL}{{\rm GL}}
\newcommand{\GSp}{{\rm GSp}}
\newcommand{\Sp}{{\rm Sp}}
\newcommand{\Hom}{{\rm Hom}}
\newcommand{\mat}[4]{\begin{bsmallmatrix}#1&#2\\#3&#4\end{bsmallmatrix}}
\newcommand{\qed}{\hspace*{\fill}\rule{1ex}{1ex}}
\newcommand{\forget}[1]{}
\def\qdots{\mathinner{\mkern1mu\raise0pt\vbox{\kern7pt\hbox{.}}\mkern2mu
\raise3.4pt\hbox{.}\mkern2mu\raise7pt\hbox{.}\mkern1mu}}
\newenvironment{proof}{\vspace{1ex}\noindent\emph{Proof.}\hspace{0.5em}}
	{\hfill\qed\vspace{2ex}}
\newenvironment{bsmallmatrix}{\left[\begin{smallmatrix}}{\end{smallmatrix}\right]}
\def\term#1{\textbf{\textit{#1}}}
\newcommand{\hathat}[1]{%
\begingroup%
  \let\macc@kerna\z@%
  \let\macc@kernb\z@%
  \let\macc@nucleus\@empty%
  \hat{\mathchoice%
    {\raisebox{.2ex}{\vphantom{\ensuremath{\displaystyle #1}}}}%
    {\raisebox{.3ex}{\vphantom{\ensuremath{\textstyle #1}}}}%
    {\raisebox{.16ex}{\vphantom{\ensuremath{\scriptstyle #1}}}}%
    {\raisebox{.14ex}{\vphantom{\ensuremath{\scriptscriptstyle #1}}}}%
    \smash{\hat{#1}}}%
\endgroup%
}
\newtheorem{lemma}{Lemma.}[section]
\newtheorem{theorem}[lemma]{Theorem.}
\newtheorem{corollary}[lemma]{Corollary.}
\newtheorem{proposition}[lemma]{Proposition.}
\begin{document}

\thispagestyle{empty}
\begin{center}
 {\bf\Large Bessel Models for Representations of $\GSp(4, q)$}

 \vspace{3ex}
 Jonathan Cohen


\end{center}

\begin{abstract}
We compute the Bessel models of irreducible representations of the finite group $\GSp(4, q)$. 
\end{abstract}

\section{Introduction}

For $F$ a local or global field, there has been significant work on Bessel models for $\GSp(4,F)$ and their applications to Siegel modular forms and automorphic representations; see \cite{Pi2011}, \cite{Pr2011}, \cite{Ro2016}, and \cite{Sc1995} for some examples. In this article, we consider the analogous situation over finite fields, and our main result is the computation of the Bessel models for all irreducible representations of $\GSp(4, q)$; see Theorem \ref{Main Theorem}. One of our primary motivations is the determination of the (nonsplit) Bessel models for the depth zero supercuspidals of $p$-adic $\GSp(4, F)$, which we will carry out in a forthcoming paper. We also remark that there are some notable differences between the $p$-adic and finite field cases. For example, it is shown in \cite{Ro2016} that all split Bessel models are unique (when they exist) over the $p$-adics, while a consequence of our computations is that the split Bessel models over finite fields can have multiplicity 2; see Corollary \ref{corollary} below. On the other hand, over both $p$-adic and finite fields, the generic representations admit every split Bessel model and (if $q>3$ in the finite field case) this characterizes genericity. 

We now briefly outline the contents of this article. In section 2 we introduce the requisite notation, in section 3 we carry out a technical preliminary computation, and in section 4 we prove the main result. 

\section{Notations}

Let $\zeta_n$ denote a complex primitive $n$th root of unity and $\delta_{i,j}$ denote the Kronecker delta for $i,j$ varying over some set. Let $q$ be a power of a prime $p$ and $\Fq$ denote the field of order $q$. If $q$ is even then the group homomorphism $\Fq\to\Fq$ given by $x\mapsto x^2+x$ is two-to-one. Let $\Fq^\circ$ be its image and $\varepsilon:\Fq\to\{\pm1\}$ be the homomorphism with kernel~$\Fq^\circ$. Let $\xi$ be a fixed element of $\Fq$ that is not in $\Fq^\circ$; so $\varepsilon(t^2+t)=1$ and $\varepsilon(t^2+t+\xi)=-1$ for all~$t\in\Fq$. If $q$ is odd let $\xi$ be a fixed nonsquare in $\Fq^\times$. 

 Fix the symplectic form
\begin{equation}\label{Jdefeq}
 J=\begin{bsmallmatrix}&&&1\\&&1\\&-1\\-1\end{bsmallmatrix}
\end{equation}
and define \begin{equation}\label{GSp4defeq}
 G:=\GSp(4,q)=\{g\in\GL(4,q)\mid\ ^tgJg=\mu(g)J\text{ for some }\mu(g)\in \Fq^\times\}.
\end{equation}
The kernel of the multiplier homomorphism $\mu:\GSp(4,q)\to \Fq^\times$ is the symplectic group $\Sp(4,q)$. The center $Z$ of $\GSp(4,q)$ consists of scalar invertible matrices. If $q$ is even then $G = \Sp(4, q)\times Z$. 
We further define the Siegel parabolic subgroup
\begin{equation}\label{Pdefeq}
 P=G\cap\begin{bsmallmatrix} *&*&*&*\\ *&*&*&*\\&&*&*\\&&*&*\end{bsmallmatrix}
\end{equation}
and its unipotent radical $N$, given by
\begin{equation}\label{Ndefeq}
 N=\{\begin{bsmallmatrix}1&&y&z\\&1&x&y\\&&1\\&&&1\end{bsmallmatrix}\mid x,y,z\in \Fq\}.
\end{equation} If $A =\begin{bsmallmatrix} a & b  \\
c & d 
\end{bsmallmatrix} \in \GL(2, q)$ then let $A' := \frac{1}{ad-bc} \begin{bsmallmatrix}
a & -b \\ -c  & d
\end{bsmallmatrix} $ so $\begin{bsmallmatrix}
A & \\ & u A'
\end{bsmallmatrix}\in P$ with multiplier $u\in \Fq^\times$. 

Let $\psi$ be a non-trivial character of $\Fq$. For $a,b,c\in\Fq$, let $\psi_{a,b,c}$ be the character of $N$ given by
\begin{equation}\label{psiabceq}
 \psi_{a,b,c}(\begin{bsmallmatrix}1&&y&z\\&1&x&y\\&&1\\&&&1\end{bsmallmatrix})=\psi(ax+by+cz)
\end{equation}
for $x,y,z\in\Fq$. We denote by $\C_{a,b,c}$ the one-dimensional representation of $N$ given by the character~$\psi_{a,b,c}$. We also define the group 
\begin{equation} \label{Tdefeq}
T = \{\begin{bsmallmatrix} r-bs & -as \\ cs  & r\\  && r -bs&  as \\ &  & -cs  &  r\end{bsmallmatrix}\mid r,s\in\Fq , \ r^2-brs+s^2ac\in \Fq^\times \}. 
\end{equation} 
The reader may readily check that $T$ normalizes $N$ and preserves the form $ax+by+cz$. Let $\chi:T \to \C^\times$ be a character. 
The map $tn\mapsto\chi(t)\psi_{a,b,c}(n)$ defines a character of the \term{Bessel subgroup} 
\begin{equation}
R:=TN,
\end{equation} which we denote by~$\chi\otimes\psi_{a,b,c}$. Let $\C_{\chi,a,b,c}$ be the one-dimensional representation of $R$ given by the character $\chi\otimes\psi_{a,b,c}$. 

The irreducible characters of $G$ have been determined in \cite{Shinoda1982} (if $q$ is odd) and \cite{Enomoto1972} (if $q$ is even). Some of the mistakes in Table~IV-2 of~\cite{Enomoto1972} have been corrected in Table~7 of~\cite{dabbaghianabdoly2007}; we remark that the values for $\theta_1$ and $\theta_2$ there are transposed.  

%

\section{Fourier coefficients}
The main result of this paper is a computation of $\dim\Hom_R(V_\sigma, \C_{\chi, a,b,c})$ for all irreducible representations $(\sigma, V_\sigma)$ of $G$, when $b^2-4ac\neq 0$. To carry this out it is convenient to first prove a preliminary result. 
\begin{proposition}\label{theta2psiabcprop}
 The dimensions of the spaces $\Hom_N(V_\sigma,\C_{a,b,c})$ for all irreducible representations $(\sigma,V_\sigma)$ of $\GSp(4,q)$ are as given in Table~\ref{GSp4qoddtable} (if $q$ is odd) and Table~\ref{GSp4qeventable} (if $q$ is even).
\end{proposition}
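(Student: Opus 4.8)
The plan is to exploit that $N$ is abelian. Writing $n(x,y,z)$ for the element of $N$ in \eqref{Ndefeq}, the map $(x,y,z)\mapsto n(x,y,z)$ identifies $N$ with the additive group $\Fq^3$, and its characters are exactly the $\psi_{a,b,c}$. Hence, writing $\Theta_\sigma$ for the character of $\sigma$, the quantity we want is an ordinary multiplicity,
\[
  \dim\Hom_N(V_\sigma,\C_{a,b,c})
  =\langle \Theta_\sigma|_N,\psi_{a,b,c}\rangle_N
  =\frac{1}{q^3}\sum_{x,y,z\in\Fq}\Theta_\sigma\big(n(x,y,z)\big)\,\overline{\psi(ax+by+cz)}.
\]
So two ingredients are needed: the values of $\Theta_\sigma$ on $N$, and the way $\psi_{a,b,c}$ weights them.

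First I would pin down the $G$-conjugacy classes that meet $N$. Each $n(x,y,z)$ is a Siegel-type unipotent element $\mat{I}{S}{0}{I}$ with $S=\mat{y}{z}{x}{y}$, and a short computation shows that the symmetric matrix $\tilde S:=\mat{x}{y}{y}{z}$ is the natural invariant: the Levi of $P$ acts on these $\tilde S$ by a twisted congruence action, so the $G$-class of $n(x,y,z)$ is determined by $\rank\tilde S$ together with, in the rank-$2$ case, the square class of $\det\tilde S=xz-y^2$. Dually $\psi_{a,b,c}$ corresponds to the binary form of discriminant $b^2-4ac$, so the hypothesis $b^2-4ac\neq0$ is precisely the statement that $\psi_{a,b,c}$ is nondegenerate, and its square class distinguishes the split from the nonsplit case. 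Since $\Theta_\sigma$ is a class function, $\Theta_\sigma\big(n(x,y,z)\big)$ depends only on the rank and square class of $\tilde S$; and since $\dim\Hom_N(V_\sigma,\C_{a,b,c})$ is constant on $P$-orbits of characters, it suffices to evaluate the sum at one split and one nonsplit representative (together with the degenerate representatives if the tables record them).

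With the classes in hand I would compute, for each class $C$ meeting $N$, the partial character sum $\sum_{n\in N\cap C}\overline{\psi_{a,b,c}(n)}$. Because $ax+by+cz$ is a nondegenerate quadratic pairing against $\tilde S$, these reduce to classical quadratic Gauss sums; this is the step that makes the answer genuinely depend on the square class of $b^2-4ac$ and on $q\bmod 4$. Weighting these against the character values $\Theta_\sigma(C)$ read off from \cite{Shinoda1982} (for $q$ odd) and from \cite{Enomoto1972}, as corrected in \cite{dabbaghianabdoly2007} (for $q$ even, remembering the transposition of $\theta_1$ and $\theta_2$), and dividing by $q^3$, yields $\dim\Hom_N(V_\sigma,\C_{a,b,c})$ for each irreducible $\sigma$; comparison with Tables~\ref{GSp4qoddtable} and~\ref{GSp4qeventable} then finishes the proof. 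Two cheap consistency checks keep the bookkeeping honest: summing $\dim\Hom_N(V_\sigma,\C_{a,b,c})$ over all $(a,b,c)$ must return $\dim V_\sigma$, and the value at $(a,b,c)=(0,0,0)$ must equal $\dim V_\sigma^N$.

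The main obstacle is the volume and delicacy of the bookkeeping rather than any single conceptual difficulty: $\GSp(4,q)$ has a long list of irreducible characters spread across many families, and the formula must be evaluated for every one of them. Within this, the genuinely error-prone points are getting the sign and normalization of the quadratic Gauss sums right (so that the split and nonsplit columns come out correctly and consistently with $q\bmod 4$), and treating $q$ even on its own footing, where the relevant unipotent classes, the character $\varepsilon$, and the subfield $\Fq^\circ$ all enter and the cited table requires the noted correction.
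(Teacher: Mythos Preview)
Your outline is essentially the paper's own proof: start from the multiplicity formula $\langle\Theta_\sigma|_N,\psi_{a,b,c}\rangle_N$, partition $N$ into its intersections with the $G$-conjugacy classes $A_0,A_1,A_{21},A_{22}$ (odd~$q$) or $A_1,A_2,A_{31},A_{32}$ (even~$q$), compute the partial sums $\sum_{n\in N\cap C}\psi_{a,b,c}(n)^{-1}$ for each class, and weight them against the tabulated values of $\Theta_\sigma$. One small correction: the partial sums are more elementary than you anticipate --- they reduce to counting the $\Fq$-zeros of the quadratic $a+by+cy^2$ rather than to genuine quadratic Gauss sums, and in particular neither the sums nor the final table depend on $q\bmod 4$ (see Lemma~\ref{FCoddlemma}); for even~$q$ the rank-$2$ split is not a square-class condition but the distinction $(x,z)=(0,0)$ versus $(x,z)\neq(0,0)$, as in Lemma~\ref{FCevenlemma}.
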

\begin{proof} 
By character theory we have 
\begin{align*}
 \dim\Hom_N(V_\sigma,\C_{a,b,c}) 
 =\frac1{q^3}\sum_{x,y,z\in\Fq}\psi(-ax-by-cz)\theta(\begin{bsmallmatrix}1&&y&z\\&1&x&y\\&&1\\&&&1\end{bsmallmatrix}),
\end{align*}
where $\theta$ is the trace character of~$\sigma$. For $2\times2$ matrices $X,Y$ over $\Fq$, write $X\sim Y$ if there exist $A\in\GL(2,\Fq)$ and $u\in\Fq^\times$ with $uAX(A')^{-1}=Y$.

Suppose that $q$ is odd. Then, for $X=\mat{y}{z}{x}{y}$,
\begin{align*}
 &\text{if $X\neq0$ and $\det(X)=0$, then $X\sim\mat{0}{1}{0}{0}$,}\\
 &\text{if $\det(X)\in\Fq^{\times2}$, then $X\sim\mat{1}{0}{0}{1}$,}\\
 &\text{if $\det(X)\in\xi\Fq^{\times2}$, then $X\sim\mat{0}{-\xi}{1}{0}$.}
\end{align*}
Here, $\xi$ is a fixed non-square in $\Fq^\times$. Hence
\begin{align*}
 & \dim\Hom_N(V_\sigma,\C_{a,b,c})=\frac1{q^3}\Bigg(\theta(\begin{bsmallmatrix}1\\&1\\&&1\\&&&1\end{bsmallmatrix})+\sum_{\substack{x,y,z\in\Fq\\y^2-xz=0\\(x,y,z)\neq(0,0,0)}}\psi(-ax-by-cz)\cdot\theta(\begin{bsmallmatrix}1&&&1\\&1\\&&1\\&&&1\end{bsmallmatrix})\\
 &\quad+\sum_{\substack{x,y,z\in\Fq\\y^2-xz\in\Fq^{\times2}}}\psi(-ax-by-cz)\cdot\theta(\begin{bsmallmatrix}1&&1\\&1&&1\\&&1\\&&&1\end{bsmallmatrix})+\sum_{\substack{x,y,z\in\Fq\\y^2-xz\in\xi\Fq^{\times2}}}\psi(-ax-by-cz)\cdot\theta(\begin{bsmallmatrix}1&&&-\xi\\&1&1\\&&1\\&&&1\end{bsmallmatrix})\Bigg).
\end{align*}
The four matrices in this equation represent the conjugacy classes denoted by $A_0$, $A_1$, $A_{21}$ and $A_{22}$ in~\cite{Shinoda1982}. Hence the assertion follows from Table~5-11 of~\cite{Shinoda1982} and Lemma~\ref{FCoddlemma} below.

Now suppose that $q$ is even. Then, for $X=\mat{y}{z}{x}{y}$,
\begin{align*}
 &\text{if $X\neq0$ and $\det(X)=0$, then $X\sim\mat{0}{1}{0}{0}$,}\\
 &\text{if $\det(X)\in\Fq^\times$ and $x=z=0$, then $X\sim\mat{1}{0}{0}{1}$,}\\
 &\text{if $\det(X)\in\Fq^\times$ and $(x,z)\neq(0,0)$, then $X\sim\mat{1}{1}{0}{1}$.}
\end{align*}
Hence
\begin{align*}
 & \dim\Hom_N(V_\sigma,\C_{a,b,c})=\frac1{q^3}\Bigg(\theta(\begin{bsmallmatrix}1\\&1\\&&1\\&&&1\end{bsmallmatrix})+\sum_{\substack{x,y,z\in\Fq\\y^2-xz=0\\(x,y,z)\neq(0,0,0)}}\psi(-ax-by-cz)\cdot\theta(\begin{bsmallmatrix}1&&&1\\&1\\&&1\\&&&1\end{bsmallmatrix})\\
 &\quad+\sum_{\substack{x,y,z\in\Fq\\y^2-xz\neq0\\(x,z)=(0,0)}}\psi(-ax-by-cz)\cdot\theta(\begin{bsmallmatrix}1&&1\\&1&&1\\&&1\\&&&1\end{bsmallmatrix})+\sum_{\substack{x,y,z\in\Fq\\y^2-xz\neq0\\(x,z)\neq(0,0)}}\psi(-ax-by-cz)\cdot\theta(\begin{bsmallmatrix}1&&1&1\\&1&&1\\&&1\\&&&1\end{bsmallmatrix})\Bigg).
\end{align*}
The four matrices in this equation represent the conjugacy classes denoted by $A_1$, $A_2$, $A_{31}$ and $A_{32}$ in~\cite{Enomoto1972}. Hence the assertion follows from Table~IV-2 of~\cite{Enomoto1972} and Lemma~\ref{FCevenlemma} below.\end{proof}

\begin{lemma}\label{FCoddlemma}
 Let $q$ be odd and $a,b,c\in\Fq$. Then
 \begin{enumerate}
  \item 
    \begin{equation}\label{FCoddlemmaeq1}
     \sum_{\substack{x,y,z\in\Fq\\y^2-xz=0\\(x,y,z)\neq(0,0,0)}}\psi(-ax-by-cz)
     =\begin{cases}
       q^2-1&\text{if }a=b=c=0,\\
       -1&\text{if }b^2-4ac=0,\:(a,c)\neq(0,0),\\
       q-1&\text{if }b^2-4ac\in\Fq^{\times2},\\
       -q-1&\text{if }b^2-4ac\in\xi\Fq^{\times2}.
      \end{cases}
    \end{equation}
  \item 
    \begin{equation}\label{FCoddlemmaeq2}
     \sum_{\substack{x,y,z\in\Fq\\ y^2-xz\in\Fq^{\times2}}}\psi(-ax-by-cz)
     =\begin{cases}
       \displaystyle\frac{q(q^2-1)}2&\text{if }a=b=c=0,\\[2ex]
       \displaystyle\frac{q(q-1)}2&\text{if }b^2-4ac=0,\:(a,c)\neq(0,0),\\[1ex]
       -q&\text{if }b^2-4ac\in\Fq^{\times2},\\[1ex]
       0&\text{if }b^2-4ac\in\xi\Fq^{\times2}.
      \end{cases}
    \end{equation}
  \item 
    \begin{equation}\label{FCoddlemmaeq3}
     \sum_{\substack{x,y,z\in\Fq\\y^2-xz\in\xi\Fq^{\times2}}}\psi(-ax-by-cz)
     =\begin{cases}
       \displaystyle\frac{q(q-1)^2}2&\text{if }a=b=c=0,\\[2ex]
       \displaystyle-\frac{q(q-1)}2&\text{if }b^2-4ac=0,\:(a,c)\neq(0,0),\\[1ex]
       0&\text{if }b^2-4ac\in\Fq^{\times2},\\[1ex]
       q&\text{if }b^2-4ac\in\xi\Fq^{\times2}.
      \end{cases}
    \end{equation}
 \end{enumerate}
\end{lemma}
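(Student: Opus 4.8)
The plan is to reduce all three sums to two auxiliary quantities plus one Gaussian exponential sum. Write $\eta$ for the quadratic character of $\Fq^\times$, extended by $\eta(0)=0$, so that for $t\neq0$ the indicator of $t\in\Fq^{\times2}$ is $(1+\eta(t))/2$ and that of $t\in\xi\Fq^{\times2}$ is $(1-\eta(t))/2$. Set $D=b^2-4ac$ and abbreviate $e(x,y,z)=\psi(-ax-by-cz)$. I would introduce
$$N_0=\sum_{\substack{x,y,z\in\Fq\\y^2-xz=0}}e(x,y,z),\qquad M=\sum_{x,y,z\in\Fq}\eta(y^2-xz)\,e(x,y,z),$$
and let $N_1$ be the sum of $e$ over the triples with $y^2-xz\neq0$. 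Since $\eta(0)=0$, the terms with $y^2-xz=0$ do not contribute to $M$, so the indicator decomposition shows that the three sums in the lemma equal $N_0-1$, $(N_1+M)/2$, and $(N_1-M)/2$ respectively. Moreover $N_0+N_1=\sum_{x,y,z}e(x,y,z)=q^3\delta_{(a,b,c),(0,0,0)}$ by factoring the sum over $x,y,z$, so everything is determined by $N_0$ and $M$.

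Both $N_0$ and $M$ are built from the single sum $I(s):=\sum_{x,y,z}\psi\big(s(y^2-xz)\big)\,e(x,y,z)$, which I would evaluate first. One has $I(0)=q^3\delta_{(a,b,c),(0,0,0)}$; and for $s\neq0$, completing the square in $y$ produces the quadratic Gauss sum $g=\sum_{t}\eta(t)\psi(t)$ together with a factor $\eta(s)\psi(-b^2/(4s))$, while the $x,z$-sum collapses by summing first over $z$ (which forces $x=-c/s$) to a factor $q\,\psi(ac/s)$. Combining gives $I(s)=q\,g\,\eta(s)\,\psi(-D/(4s))$ for $s\neq0$.

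Now $N_0$ and $M$ fall out by orthogonality. Using $\frac1q\sum_{s}\psi(sw)=\delta_{w,0}$ gives $N_0=\frac1q\sum_s I(s)=q^2\delta_{(a,b,c),(0,0,0)}+g\sum_{s\neq0}\eta(s)\psi(-D/(4s))$; the remaining sum is $0$ when $D=0$ and equals $\eta(-D/4)\,g$ when $D\neq0$, so with $g^2=\eta(-1)q$ and $\eta(4)=1$ one finds $N_0=q\,\eta(D)$ when $D\neq0$ and $N_0=q^2\delta_{(a,b,c),(0,0,0)}$ when $D=0$. For $M$ I would instead use $\eta(w)=\frac1g\sum_s\eta(s)\psi(sw)$ to get $M=\frac1g\sum_s\eta(s)I(s)$; the key point is that $\eta(s)^2=1$ collapses this Sali\'e-type expression to the elementary $M=q\sum_{s\neq0}\psi(-D/(4s))$, which equals $q(q-1)$ if $D=0$ and $-q$ if $D\neq0$. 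Substituting these values of $N_0$, $N_1=q^3\delta_{(a,b,c),(0,0,0)}-N_0$, and $M$ into the three formulas above, and splitting into the four cases $(a,b,c)=(0,0,0)$; $D=0$ with $(a,c)\neq(0,0)$; $D\in\Fq^{\times2}$; and $D\in\xi\Fq^{\times2}$, reproduces every table entry.

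The only genuine obstacle is the bookkeeping of Gauss-sum constants in $N_0$: one must track the factors $\eta(-1)$ and $\eta(4)$ arising from $g^2=\eta(-1)q$ and from completing the square, and check that they cancel to leave exactly $q\,\eta(D)$. Everything else is orthogonality and the standard Gauss-sum identities; in particular, noticing that $\eta(s)^2=1$ lets $M$ sidestep any real Sali\'e-sum evaluation is what keeps the whole computation elementary.
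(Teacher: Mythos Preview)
Your proof is correct and takes a genuinely different route from the paper's.

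The paper proceeds by direct parameterization and case splitting: for part~(i) it writes the cone $y^2=xz$ as $\{(x,xt,xt^2):x\neq0\}$ together with the line $x=y=0$, reducing the problem to counting roots of $a+bt+ct^2$; for part~(ii) it sets $y^2-xz=u^2$, changes variables, and then treats $c=0$ and the three possibilities for $b^2-4ac$ one by one with ad hoc manipulations (including a count of points on $y^2-\xi x^2=1$); part~(iii) is deduced from (i), (ii) and the full sum. No Gauss sums enter.

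Your approach is Fourier-theoretic and uniform: you package everything into the single exponential sum $I(s)$, evaluate it once via a quadratic Gauss sum in $y$ and orthogonality in $z$, and then recover $N_0$ and $M$ by averaging $I(s)$ against $1$ and $\eta(s)$. The observation that $\eta(s)^2=1$ collapses $M$ to an elementary Ramanujan-type sum is the key simplification, and the cancellation $g^2\eta(-1)=q$, $\eta(4)=1$ in $N_0$ is handled correctly. This buys you a case-free computation up to the very last substitution, at the cost of invoking the identity $g^2=\eta(-1)q$; the paper's argument is more hands-on but entirely Gauss-sum-free.
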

\begin{proof}
Since
\begin{equation*}
 \sum_{x,y,z\in\Fq}\psi(-ax-by-cz)=
 \begin{cases}
  q^3&\text{if }a=b=c=0,\\
  0&\text{otherwise},
 \end{cases}
\end{equation*}
iii) will follow once we prove i) and ii).

i) We calculate
\begin{align*}
 \sum_{\substack{x,y,z\in\Fq\\y^2-xz=0\\(x,y,z)\neq(0,0,0)}}\psi(-ax-by-cz) 
 &=\sum_{\substack{x\in\Fq^\times\\y\in\Fq}}\psi(-x(a+by+cy^2))+\sum_{z\in\Fq^\times}\psi(-cz).
\end{align*}
If $c=0$, then we see easily that
\begin{equation*}
 \sum_{\substack{x,y,z\in\Fq\\y^2-xz=0\\(x,y,z)\neq(0,0,0)}}\psi(-ax-by-cz)
 =\begin{cases}
   q-1&\text{if }b\neq0,\\
   -1&\text{if }a\neq0,\:b=0,\\
   q^2-1&\text{if }a=b=0.
  \end{cases}
\end{equation*}
Suppose that $c\neq0$. Then
\begin{align*}
 \sum_{\substack{x,y,z\in\Fq\\y^2-xz=0\\(x,y,z)\neq(0,0,0)}}\psi(-ax-by-cz)&=\sum_{\substack{x\in\Fq^\times\\y\in\Fq}}\psi(-x(a+by+cy^2))-1\\
 &=\begin{cases}
    1\cdot(q-1)+(q-1)(-1)-1&\text{if }b^2-4ac=0,\\
    2\cdot(q-1)+(q-2)(-1)-1&\text{if }b^2-4ac\in\Fq^{\times2},\\
    0\cdot(q-1)+q(-1)-1&\text{if }b^2-4ac\in\xi\Fq^{\times2},\\
   \end{cases}\\
 &=\begin{cases}
    -1&\text{if }b^2-4ac=0,\\
    q-1&\text{if }b^2-4ac\in\Fq^{\times2},\\
    -q-1&\text{if }b^2-4ac\in\xi\Fq^{\times2}.
   \end{cases}
\end{align*}
This concludes the proof of i).

ii) We calculate
\begin{align*}
 \sum_{\substack{x,y,z\in\Fq\\y^2-xz\in\Fq^{\times2}}}\psi(-ax-by-cz)
 &=\frac12\sum_{u\in\Fq^\times}\sum_{\substack{x,y,z\in\Fq\\y^2-xz=1}}\psi(u(ax+by+cz)). 
 \end{align*} The $x=0$ terms contribute \begin{align*}
\frac{1}{2}\sum\limits_{u\in \Fq^\times} \sum\limits_{z\in \Fq}\psi(cz)(\psi(ub)+\psi(-ub)) &= q\delta_{c,0}(-1+q\delta_{b,0}).
 \end{align*} The $x\neq 0$ terms contribute \begin{align*}
 \frac{1}{2}\sum\limits_{u,x\in \Fq^\times}\sum\limits_{y\in \Fq} \psi(u(ax+by +c (\frac{y^2-1}{x}) ))
 &= \frac{1}{2}\sum\limits_{u,x\in \Fq^\times}\sum\limits_{y\in \Fq} \psi(u(ax^2+byx +cy^2-c )).
 \end{align*} 
 
First suppose that $c=0$. Then\begin{align*}
\frac{1}{2}\sum\limits_{u,x\in \Fq^\times}\sum\limits_{y\in \Fq} \psi(u(ax^2+byx +cy^2-c ))
&= \frac{1}{2} \sum\limits_{u,x\in \Fq^\times}\sum\limits_{y\in \Fq} \psi(ua+by) &= \frac{q(q-1)}{2}\delta_{b,0}(-1+q\delta_{a,0}).
 \end{align*} 

This confirms the formulas when $c=0$. From now on suppose that $c\neq0$. Then
\begin{align}\label{FCoddlemmaeq4}
\frac{1}{2}\sum\limits_{u,x\in \Fq^\times}\sum\limits_{y\in \Fq} \psi(u(ax^2+byx +cy^2-c )) &= \frac{1}{2}\sum\limits_{u,x\in \Fq^\times}\sum\limits_{y\in \Fq} \psi(u(y^2-x^2(b^2-4ac)-1 )).
 \end{align} 
Suppose that $b^2-4ac=0$. Then
\begin{align*}
 \text{\eqref{FCoddlemmaeq4}}&=\frac{(q-1)}{2}\sum_{\substack{u\in\Fq^\times\\y\in\Fq}}\psi(u(y^2-1))
 = \frac{(q-1)}{2}( 2(q-1)-(q-2) ) =\frac{q(q-1)}{2}.
\end{align*}
Next suppose that $b^2-4ac\in\xi\Fq^{\times2}$. Then using the fact that there are $q+1$ solutions $(x,y)\in \Fq\times \Fq$ to the equation $y^2-x^2\xi = 1$, we obtain 
\begin{align*}
  2*\text{\eqref{FCoddlemmaeq4}}&=\sum_{\substack{u\in\Fq^\times\\x,y\in\Fq}}\psi\Big(-u\Big(y^2-x^2\xi-1\Big)\Big)-\sum_{\substack{u\in\Fq^\times\\y\in\Fq}}\psi(-u(y^2-1))\\
  & (q-1)(q+1) - (q^2-(q+1)) - (2(q-1)-(q-2))=0
\end{align*}
Finally, suppose that $b^2-4ac\in\Fq^{\times2}$. Then
\begin{align*}
 \text{\eqref{FCoddlemmaeq4}}&=\frac{1}{2}\sum_{\substack{u,x\in\Fq^\times\\y\in\Fq}}\psi\Big(u\big(y^2-x^2-1\big)\Big)
 =\frac{1}{2}\sum_{\substack{u,x\in\Fq^\times\\y\in\Fq}}\psi\Big(u\big(y^2+2yx-1\big)\Big)\\
 &=\frac{1}{2}\sum_{u,x,y\in\Fq^\times}\psi\Big(u\big(y^2+x-1\big)\Big)+\frac{1}{2}\sum_{u,x\in\Fq^\times}\psi(-u)\\
 &= \sum\limits_{u,x\in \Fq^\times} \psi(ux) + \frac{(q-3)}{2}\sum\limits_{u,x\in \Fq^\times} \psi(u(x+1)) -\frac{q-1}{2}\\
 &= -(q-1)+ \frac{q-3}{2}( q-1-(q-2) )- \frac{q-1}{2} = -q.
\end{align*}
This concludes the proof.
\end{proof}

\begin{lemma}\label{FCevenlemma}
 Let $q$ be even and $a,b,c\in\Fq$. Then
 \begin{enumerate}
  \item 
    \begin{equation}\label{FCevenlemmaeq1}
     \sum_{\substack{x,y,z\in\Fq\\y^2-xz=0\\(x,y,z)\neq(0,0,0)}}\psi(-ax-by-cz)
     =\begin{cases}
       q^2-1&\text{if }a=b=c=0,\\
       -1&\text{if }b=0,\:(a,c)\neq(0,0),\\
       q-1&\text{if }b\neq0,\:\varepsilon(acb^{-2})=1,\\
       -q-1&\text{if }b\neq0,\:\varepsilon(acb^{-2})=-1.
      \end{cases}
    \end{equation}
  \item 
    \begin{equation}\label{FCevenlemmaeq2}
     \sum_{\substack{x,y,z\in\Fq\\y^2-xz\neq0\\(x,z)=(0,0)}}\psi(-ax-by-cz)
     =\begin{cases}
       q-1&\text{if }a=b=c=0,\\
       q-1&\text{if }b=0,\:(a,c)\neq(0,0),\\
       -1&\text{if }b\neq0,\:\varepsilon(acb^{-2})=1,\\
       -1&\text{if }b\neq0,\:\varepsilon(acb^{-2})=-1.
      \end{cases}
    \end{equation}
  \item 
    \begin{equation}\label{FCevenlemmaeq3}
     \sum_{\substack{x,y,z\in\Fq\\y^2-xz\neq0\\(x,z)\neq(0,0)}}\psi(-ax-by-cz)
     =\begin{cases}
       (q-1)(q^2-1)&\text{if }a=b=c=0,\\
       -q+1&\text{if }b=0,\:(a,c)\neq(0,0),\\
       -q+1&\text{if }b\neq0,\:\varepsilon(acb^{-2})=1,\\
       q+1&\text{if }b\neq0,\:\varepsilon(acb^{-2})=-1.
      \end{cases}
    \end{equation}
 \end{enumerate}
\end{lemma}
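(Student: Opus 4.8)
The plan is to mirror the structure of the proof of Lemma~\ref{FCoddlemma}, isolating the genuinely characteristic-two phenomenon. First I would record the elementary identity
\[
\sum_{x,y,z\in\Fq}\psi(-ax-by-cz)=\begin{cases}q^3&\text{if }a=b=c=0,\\0&\text{otherwise,}\end{cases}
\]
and observe that the three summation regions in i), ii), iii), together with the single point $(x,y,z)=(0,0,0)$ (which contributes $1$), partition $\Fq^3$. Here one uses that in characteristic two $y^2-xz=y^2$ vanishes exactly when $y=0$, so the region defined by $(x,z)=(0,0)$ and $y^2-xz\neq0$ is precisely $x=z=0$, $y\neq0$. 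Consequently iii) is forced by the other two via iii) $=$ (full sum) $-\,1-{}$i)$\,-{}$ii), and it suffices to establish the first two formulas.

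Formula ii) is immediate: on the region $x=z=0$, $y\neq0$ the sum collapses to $\sum_{y\in\Fq^\times}\psi(-by)$, which equals $q-1$ if $b=0$ and $-1$ otherwise, matching every line of \eqref{FCevenlemmaeq2}. For i) I would split according to whether $x=0$. The points with $x=0$ on the cone $y^2=xz$ force $y=0$ (as $y^2=0$) and $z\neq0$, contributing $\sum_{z\in\Fq^\times}\psi(-cz)$, equal to $q-1$ if $c=0$ and $-1$ otherwise. On the part $x\neq0$ the substitution $y\mapsto xy$ is a bijection carrying $z=y^2/x$ to $xy^2$ and the phase to $-x(a+by+cy^2)$, so this piece equals $\sum_{x\in\Fq^\times,\,y\in\Fq}\psi(-x(a+by+cy^2))$. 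Evaluating the inner sum over $x\in\Fq^\times$ turns this into $q(m-1)$, where $m=\#\{y\in\Fq: cy^2+by+a=0\}$. Thus i) reduces entirely to counting roots of $cy^2+by+a$.

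The heart of the matter, and the place where the even case diverges from the odd one, is this root count, since the discriminant $b^2-4ac$ degenerates to $b^2$ and the quadratic formula is unavailable. I would dispose of the degenerate cases first: if $c=0$ the equation is linear, giving $m=1$ when $b\neq0$ and $m\in\{0,q\}$ according to whether $a\neq0$ or $a=0$ when $b=0$; if $c\neq0$ but $b=0$, then $cy^2+a=0$ has the unique root determined by the bijectivity of Frobenius, so $m=1$. The essential case is $b,c\neq0$: dividing by $c$ and substituting $y=(b/c)w$ converts the equation into the Artin--Schreier form $w^2+w=acb^{-2}$, whose number of solutions is $2$ or $0$ according as $acb^{-2}$ lies in $\Fq^\circ$ or not, i.e.\ according as $\varepsilon(acb^{-2})=1$ or $-1$. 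This is exactly what produces the invariant $\varepsilon(acb^{-2})$ in the statement.

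Substituting each value of $m$ together with the $z$-contribution into $q(m-1)+(\text{$z$-sum})$ reproduces the four lines of \eqref{FCevenlemmaeq1}; one checks that the degenerate subcases fold correctly into the stated cases precisely because $\varepsilon(0)=1$ (so, e.g., $b\neq0$ with $c=0$ sits in the case $\varepsilon(acb^{-2})=1$). Reading off iii) from the partition identity of the first paragraph then completes the proof. The only real obstacle is the characteristic-two root count, and the Artin--Schreier reduction handles it cleanly, playing the role that the square/nonsquare dichotomy of the discriminant plays in Lemma~\ref{FCoddlemma}.
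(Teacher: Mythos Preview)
Your proposal is correct and follows essentially the same route as the paper: reduce iii) to i) and ii) via the partition of $\Fq^3$, dispatch ii) as the trivial sum $\sum_{y\in\Fq^\times}\psi(-by)$, and for i) split off $x=0$ and reduce the $x\neq0$ piece via the substitution $y\mapsto xy$ to counting roots of $cy^2+by+a$, using the Artin--Schreier normal form $w^2+w=acb^{-2}$ when $b,c\neq0$. Your packaging of the $x\neq0$ contribution as $q(m-1)$ is a mild cosmetic improvement over the paper's case-by-case tally, but the argument is the same.
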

\begin{proof}
Since
\begin{equation*}
 \sum_{x,y,z\in\Fq}\psi(-ax-by-cz)=
 \begin{cases}
  q^3&\text{if }a=b=c=0,\\
  0&\text{otherwise},
 \end{cases}
\end{equation*}
iii) will follow once we prove i) and ii). Note that ii) is an easy exercise. To prove~i), we calculate
\begin{align*}
 \sum_{\substack{x,y,z\in\Fq\\y^2-xz=0\\(x,y,z)\neq(0,0,0)}}\psi(-ax-by-cz)&=\sum_{\substack{x\in\Fq^\times\\y\in\Fq}}\psi\Big(-ax-by-c\frac{y^2}x\Big)+\sum_{z\in\Fq^\times}\psi(-cz)\\
 &=\sum_{\substack{x\in\Fq^\times\\y\in\Fq}}\psi(-x(a+by+cy^2))+\sum_{z\in\Fq^\times}\psi(-cz).
\end{align*}
If $c=0$, then we see easily that
\begin{equation*}
 \sum_{\substack{x,y,z\in\Fq\\y^2-xz=0\\(x,y,z)\neq(0,0,0)}}\psi(-ax-by-cz)
 =\begin{cases}
   q-1&\text{if }b\neq0,\\
   -1&\text{if }a\neq0,\:b=0,\\
   q^2-1&\text{if }a=b=0.
  \end{cases}
\end{equation*}
Suppose that $c\neq0$. Then
\begin{equation*}
 \sum_{\substack{x,y,z\in\Fq\\y^2-xz=0\\(x,y,z)\neq(0,0,0)}}\psi(-ax-by-cz)=\sum_{\substack{x\in\Fq^\times\\y\in\Fq}}\psi(-x(a+by+cy^2))-1.
\end{equation*}
If $a=b=0$, this is easily seen to be $-1$. Suppose that $b=0$ and $a\neq0$. Then $a+by+cy^2=0$ for exactly one value of $y$, so that
\begin{equation*}
 \sum_{\substack{x,y,z\in\Fq\\y^2-xz=0\\(x,y,z)\neq(0,0,0)}}\psi(-ax-by-cz)=1(q-1)+(q-1)(-1)-1=-1.
\end{equation*}
Suppose that $b\neq0$ (and still $c\neq0$). Then
\begin{equation*}
 a+by+cy^2=0\;\Longleftrightarrow\;bcy+(cy)^2=ca\;\Longleftrightarrow\;cb^{-1}y+(cb^{-1}y)^2=cab^{-2}
\end{equation*}
If $\varepsilon(cab^{-2})=1$, then this equation has exactly two solutions, otherwise none. Hence
\begin{align*}
 \sum_{\substack{x,y,z\in\Fq\\y^2-xz=0\\(x,y,z)\neq(0,0,0)}}\psi(-ax-by-cz)&=\begin{cases}
    2\cdot(q-1)+(q-2)(-1)-1&\text{if }\varepsilon(cab^{-2})=1,\\
    0\cdot(q-1)+q(-1)-1&\text{if }\varepsilon(cab^{-2})=-1,\\
   \end{cases}\\
 &=\begin{cases}
    q-1&\text{if }\varepsilon(cab^{-2})=1,\\
    -q-1&\text{if }\varepsilon(cab^{-2})=-1.
   \end{cases}
\end{align*}
This concludes the proof of i).
\end{proof}
\section{Bessel models}

We now state and prove the main result of this paper. \begin{theorem}\label{Main Theorem}
If $b^2-4ac\neq 0$ then the dimensions of the spaces $\Hom_R(V_\sigma, \C_{\chi,a,b,c})$ for all irreducible representations $(\sigma, V_\sigma)$ of $\GSp(4, q)$ are as given in Table \ref{Bessel odd} if $q$ is odd and in Table \ref{Bessel even} if $q$ is even.  
\end{theorem}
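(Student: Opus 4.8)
The plan is to reduce the Bessel dimension to an inner product over the abelian group $T$ and then to evaluate the resulting ``twisted'' Fourier coefficients by the same conjugacy-class bookkeeping used in Proposition \ref{theta2psiabcprop}. Since $T$ fixes $\psi_{a,b,c}$ and $R=TN$ with $T\cap N=\{1\}$, writing $r=tn$ and letting $\theta$ be the trace character of $\sigma$ gives
\[
\dim\Hom_R(V_\sigma,\C_{\chi,a,b,c})
=\frac1{|T|\,q^3}\sum_{t\in T}\overline{\chi(t)}\sum_{n\in N}\overline{\psi_{a,b,c}(n)}\,\theta(tn).
\]
Set $F_\sigma(t):=\frac1{q^3}\sum_{n\in N}\overline{\psi_{a,b,c}(n)}\,\theta(tn)$, a function on $T$ equal, up to the standard duality, to the trace of $t$ acting on the space $W=\Hom_N(V_\sigma,\C_{a,b,c})$, so that $F_\sigma(\idm)=\dim W$ is exactly the quantity tabulated in Proposition \ref{theta2psiabcprop}. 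The theorem then amounts to the identity $\dim\Hom_R(V_\sigma,\C_{\chi,a,b,c})=\frac1{|T|}\sum_{t\in T}F_\sigma(t)\overline{\chi(t)}$, and the whole problem becomes the determination of the class function $F_\sigma$ on $T$.

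First I would record the structure of $T$. When $b^2-4ac\neq0$ the form $r^2-brs+acs^2$ is nondegenerate, so $T$ is a commutative torus: split of order $(q-1)^2$ when $b^2-4ac$ is a nonzero square, and nonsplit of order $q^2-1$ when $b^2-4ac$ is a nonsquare (with the evident $\varepsilon(acb^{-2})$ substitute when $q$ is even). Because $T$ is abelian, $F_\sigma$ is a nonnegative integer combination of the characters of $T$, and each multiplicity is recovered from the values $F_\sigma(t)$ by orthogonality on $T$. Thus it suffices to compute $F_\sigma(t)$ for one representative $t$ of each of the finitely many eigenvalue types in $T$ (central, regular split, regular nonsplit, and their degenerate specializations), for every irreducible $\sigma$.

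The heart of the proof, and the step I expect to be the main obstacle, is the evaluation of the inner sum defining $F_\sigma(t)$ for $t\neq\idm$. Unlike the case $t=\idm$ treated in Proposition \ref{theta2psiabcprop}, the elements $t$ and $n$ do not commute, so the $G$-conjugacy class of $tn$ cannot be read off from the Jordan decompositions of $t$ and $n$ separately; instead I would compute the characteristic polynomial and the remaining conjugacy invariants of the $4\times4$ matrix $t\,n(x,y,z)$ directly and match $tn$ to the classes of \cite{Shinoda1982} (for $q$ odd) and \cite{Enomoto1972} (for $q$ even). As $(x,y,z)$ ranges over $N$, the class of $tn$ is governed by the value of $y^2-xz$ relative to the eigenvalue data of $t$, in the same way that the partition by $\det\mat{y}{z}{x}{y}$ organized the sum when $t=\idm$. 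Substituting the tabulated values of $\theta$ and summing then produces twisted Gauss- and Kloosterman-type exponential sums generalizing Lemmas \ref{FCoddlemma} and \ref{FCevenlemma}, which I would evaluate by the same completion-of-the-square and point-counting arguments, carried out separately in the split and nonsplit cases and for $q$ odd versus $q$ even.

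Finally, having assembled $F_\sigma$ as an explicit $T$-character for each type of $\sigma$, I would apply orthogonality on $T$ to extract the multiplicity as a function of $\chi$ (equivalently, of how $\chi$ restricts to the central and regular parts of $T$) and record the outcomes in Tables \ref{Bessel odd} and \ref{Bessel even}. A useful internal check at every stage is that $\sum_\chi\dim\Hom_R(V_\sigma,\C_{\chi,a,b,c})=\dim W$ must reproduce Proposition \ref{theta2psiabcprop}. The split case is precisely where a character of $T$ can occur in $W$ with multiplicity $2$, yielding the multiplicity-two split Bessel models of Corollary \ref{corollary}; verifying that this occurs only for the expected families, and that the generic representations acquire every split $\chi$, is the most delicate part of the final extraction.
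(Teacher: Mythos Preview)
Your overall framework is exactly the paper's: write the Bessel dimension as a character inner product over $R=TN$, split off the contribution of the center of $T$ (which recovers Proposition~\ref{theta2psiabcprop}), and for the remaining regular $t\in T\setminus Z$ evaluate the inner sum over $N$ by identifying the $G$-conjugacy class of $tn$ and reading off $\theta$ from \cite{Shinoda1982} or \cite{Enomoto1972}.

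Where your proposal diverges from the actual computation is in the expected shape of that conjugacy-class analysis. You anticipate that ``the class of $tn$ is governed by the value of $y^2-xz$ relative to the eigenvalue data of $t$,'' leading to ``twisted Gauss- and Kloosterman-type exponential sums generalizing Lemmas~\ref{FCoddlemma} and~\ref{FCevenlemma}.'' In fact the situation for regular $t$ is far simpler than for $t=1$: once $s\neq0$, the conjugacy class of $tn(x,y,z)$ in $G$ depends only on the eigenvalues $\alpha_\pm$ of $t$ and on whether the \emph{linear} form $ax+by+cz$ vanishes. Concretely (for $q$ odd), $tn$ lies in $D_0(\alpha_+,\alpha_-)$ or $D_1(\alpha_+,\alpha_-)$ in the split case, and in $F_0(\alpha_+)$ or $F_1(\alpha_+)$ in the nonsplit case, according as $ax+by+cz=0$ or not; the even-$q$ story is analogous with the $C_2/D_2$ and $C_4/D_4$ classes. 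The quadratic invariant $y^2-xz$ plays no role here.

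This matters because it collapses the inner $N$-sum without any Gauss or Kloosterman input: the two strata contribute $q^2$ and $-q^2$ respectively, and the whole $S_2$ term becomes a single character sum over $T\setminus Z$ with coefficient $\theta(D_0)-\theta(D_1)$ (resp.\ $\theta(F_0)-\theta(F_1)$, $\theta(C_2)-\theta(D_2)$, $\theta(C_4)-\theta(D_4)$). That sum is then evaluated directly from the published character tables and the isomorphism $T_+\cong\Fq^\times$ (split) or $T\cap\Sp(4,q)\cong\langle\eta\rangle$ (nonsplit). So your plan would still succeed once you carried out the conjugacy-class identification, but the ``main obstacle'' you flag is not the one that actually arises, and no analogue of Lemmas~\ref{FCoddlemma}--\ref{FCevenlemma} is needed for the regular part.
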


\begin{proof} 
Let $\omega$ be the central character and $\theta$ the trace character of $\sigma$. 
By character theory, \begin{eqnarray}
 \dim\Hom_R(V_\sigma,\C_{\chi,a,b,c}) 
 &=&\frac1{|T|q^3}\sum_{\substack{t\in T \\ x,y,z\in \Fq }}\chi(t)^{-1}\psi(-ax-by-cz)\theta(t\begin{bsmallmatrix}1&&y&z\\&1&x&y\\&&1\\&&&1\end{bsmallmatrix})\nonumber \\
 &=& S_1 + S_2\nonumber 
\end{eqnarray} where \begin{align*}
 S_1&=\frac1{|T|q^3}\sum_{t\in Z}\sum_{x,y,z\in\Fq}\chi(t)^{-1}\psi(-ax-by-cz)\theta(t\begin{bsmallmatrix}1&&y&z\\&1&x&y\\&&1\\&&&1\end{bsmallmatrix}),\\
 S_2&=\frac1{|T|q^3}\sum_{t\in T\setminus Z}\sum_{x,y,z\in\Fq}\chi(t)^{-1}\psi(-ax-by-cz)\theta(t\begin{bsmallmatrix}1&&y&z\\&1&x&y\\&&1\\&&&1\end{bsmallmatrix}).\end{align*} Clearly if $\chi|_Z\neq \omega$ then $S_1=S_2=0$, so assume that $\chi|_Z=\omega$. Then \begin{align*}
 S_1 = \frac{q-1}{|T|q^3}\sum\limits_{x,y,z\in \Fq}\psi(-ax-by-cz)\theta(\begin{bsmallmatrix}1&&y&z\\&1&x&y\\&&1\\&&&1\end{bsmallmatrix}).
 \end{align*} Up to the factor of $(q-1)/|T|$, this was computed in the previous section.

Suppose first that $q$ is odd. Evidently $T\cong \Fq^\times \times \Fq^\times$ if $b^2-4ac$ is a square and $T\cong \mathbb{F}_{q^2}^\times$ otherwise. Let  \begin{equation}
 g=\begin{bsmallmatrix}r-sb/2 & -sa \\ sc & r+sb/2 \\&& r-sb/2 & sa \\&& -sc & r+sb/2\end{bsmallmatrix}\begin{bsmallmatrix}1&&y&z\\&1&x&y\\&&1\\&&&1\end{bsmallmatrix}\in TN.
\end{equation} The eigenvalues of $g$ are $\alpha_\pm =\alpha_\pm(g)= r\pm s(b^2/4-ac)^{1/2}$. If $s=0$, the conjugacy class of $g$ was determined in the previous section. Suppose $ s\neq 0$. If $b^2-4ac$ is a square then \begin{equation}
 g\text{ is in a conjugacy class of type }
 \begin{cases}
  D_0(\alpha_+,\alpha_-) &\text{if } ax+by+cz=0,\\
 D_1(\alpha_+,\alpha_-) &\text{if } ax+by+cz\neq 0,\\
 \end{cases}
\end{equation} while if $b^2-4ac$ is a nonsquare then \begin{equation}
 g\text{ is in a conjugacy class of type }
 \begin{cases}
  F_0(\alpha_+) &\text{if } ax+by+cz=0,\\
 F_1(\alpha_+) &\text{if } ax+by+cz\neq 0,\\
 \end{cases}.
\end{equation} Here we are using the notation of \cite{Shinoda1982}. 

Suppose $b^2-4ac$ is a square, so $|T|=(q-1)^2$. Define the subgroups $T_\pm =\{ t\in T| \ \alpha_\mp(t) =1  \}$ and write $\chi_\pm := \chi|_{T_\pm}$. Clearly $T=T_+T_-=T_\pm Z\cong T_+ \times T_-\cong Z\times T_\pm$ and $T_\pm \cong \Fq^\times$ via $t\mapsto \alpha_\pm(t)$. The reader can check that if $t\in T_\pm$ then $\alpha_\pm(t) = 2r-1$. Let $t_d$ denote the unique element of $T_+$ with eigenvalues $d$ and $1$.  
From the above we have \begin{align*}
S_2&=\frac1{q^3(q-1)^2}\sum\limits_{t\in T\setminus Z}\Bigg(\sum\limits_{\substack{x,y,z\in\Fq\\ax+by+zc=0}}\chi(t)^{-1}\psi(-ax-by-cz)\theta(D_0(\alpha_+(t), \alpha_-(t) ) )\nonumber \\
 &\hspace{25ex}+\sum\limits_{\substack{x,y,z\in\Fq\\ax+by+zc\neq0}}\chi(t)^{-1}\psi(-ax-by-cz)\theta(D_1(\alpha_+(t), \alpha_-(t) ) )\Bigg)\nonumber \\
&=\frac{1}{q(q-1)^2}\sum\limits_{t\in T\setminus Z} \chi(t)^{-1} \left[\theta (D_0(\alpha_+(t),\alpha_-(t) )) - \theta (D_1(\alpha_+(t),\alpha_-(t)) )  \right]\nonumber\\
&= \frac{1}{q(q-1)^2} \sum\limits_{ t\in T\setminus Z} \chi\left( \frac{1}{\alpha_-(t)}t\right)^{-1} \left[\theta (D_0(\alpha_+(t)/\alpha_-(t),1 )) - \theta (D_1(\alpha_+(t)/\alpha_-(t) , 1) )  \right]\nonumber\\
&= \frac{1}{q(q-1)}\sum\limits_{1\neq t\in T_+} \chi(t)^{-1} \left[\theta (D_0(\alpha_+(t), 1)) - \theta (D_1(\alpha_+(t),1 ))  \right]\nonumber\\
&= \frac{1}{q(q-1)}\sum\limits_{ r\in \Fq\setminus \{1/2, 1\}} \chi(t_{2r-1})^{-1} \left[\theta (D_0(2r-1, 1)) - \theta (D_1(2r-1,1 ))  \right]\nonumber\\
&= \frac{1}{q(q-1)}\sum\limits_{1\neq d\in \Fq^\times} \chi_+(d)^{-1} \left[\theta (D_0(d, 1)) - \theta (D_1(d,1 ))  \right].\nonumber
\end{align*} 
In the last line we identify $d$ and $t_d$ 
via the isomorphism $T_+\cong \Fq^\times$. 
The value of $S_2$ can now readily be computed from the character table in \cite{Shinoda1982}. 

Now suppose $b^2-4ac$ is not a square, so $|T|=q^2-1$. 
For $t\in T\setminus Z$, let $\alpha_t, \alpha_t^q$ denote the eigenvalues of $t$. By the above considerations,
\begin{align*}
 S_2&=\frac1{q^3(q^2-1)}\sum_{t\in T\setminus Z}\Bigg(\sum_{\substack{x,y,z\in\Fq\\ax+by+zc=0}}\chi(t)^{-1}\psi(-ax-by-cz)\theta(F_0(\alpha_t) )\\
 &\hspace{25ex}+\sum_{\substack{x,y,z\in\Fq\\ax+by+zc\neq0}}\chi(t)^{-1}\psi(-ax-by-cz)\theta(F_1(\alpha_t) )\Bigg)\\
 &=\frac1{q^3(q^2-1)}\sum_{t\in T\setminus Z}\chi(t)^{-1}\Bigg(q^2\theta(F_0(\alpha_t) )\\
 &\hspace{5ex}+\Bigg(\sum_{x,y,z\in\Fq}\psi(-ax-by-cz)-\sum_{\substack{x,y,z\in\Fq\\ax+by+zc=0}}\psi(-ax-by-cz)\Bigg)\theta(F_1(\alpha_t) )\Bigg)\\
 &=\frac1{q(q^2-1)}\sum_{t\in T\setminus Z}\chi(t)^{-1}\big(\theta(F_0(\alpha_t) )-\theta(F_1(\alpha_t) )\big). 
\end{align*} The value of $S_2$ can now readily be computed from the character tables in \cite{Shinoda1982}. 


Now suppose $q$ is even, so $b\neq 0$. Following \cite{Enomoto1972} we fix elements $\gamma$ and $\eta$ of $\mathbb{F}_{q^2}$ of orders $q-1$ and $q+1$, respectively. Evidently $T\cong \Fq^\times\times \Fq^\times$ if $\varepsilon(acb^{-2})=1$ and $T\cong \mathbb{F}_{q^2}^\times$ otherwise.   
Let \begin{equation}
g = \begin{bsmallmatrix} r+bs & as \\ sc  & r\\&& r+bs & as \\&&  cs  & r s\end{bsmallmatrix}\begin{bsmallmatrix}1&&y&z\\&1&x&y\\&&1\\&&&1\end{bsmallmatrix}\in TN.
\end{equation} Note $\mu(g)^{-1/2}g \in \Sp(4,q)$ and if $s=0$ then the conjugacy class of $\mu(g)^{-1/2}g = r^{-1/2}g$ was found in the previous section. So assume $s\neq0$. If $\varepsilon(acb^{-2})=1$ then the eigenvalues of $g$ are $\gamma^{\pm i}$ for some $i$ and 
\begin{equation}
\mu(g)^{-1/2}g\text{ is in a conjugacy class of type }
 \begin{cases}
  C_2(i) &\text{if } ax+by+cz=0,\\
D_2(i) &\text{if } ax+by+cz\neq 0,\\
 \end{cases}\end{equation} while if $\varepsilon(acb^{-2})=-1$ then the eigenvalues of $g$ are $\eta^{\pm i}$ for some $i$ and \begin{equation}\label{finbeseq39}
   \mu(g)^{-1/2} g\text{ is in a conjugacy class of type }
    \begin{cases}
     C_4(i)&\text{if }ax+by+cz=0,\\
     D_4(i)&\text{if }a x+by+cz\neq0.
    \end{cases}
   \end{equation} Here we are using the notation of \cite{Enomoto1972}. We identify $T\cap \Sp(4, q)$ with $\Fq^\times = \langle \gamma\rangle$ if $\varepsilon(acb^{-2})=1$ and with $\langle \eta\rangle$ if $\varepsilon(acb^{-2})=-1$. 
 Arguing in a similar manner to the odd $q$ case, we obtain \begin{eqnarray}
    S_2&=& \frac{1}{q^3|T|} \sum\limits_{t\in T\setminus Z} \sum\limits_{x,y,z\in \Fq} \chi(t)^{-1}\omega( \mu(t)I_4 )^{1/2} \psi(-a x -by-cz)\theta( \mu(t)^{-1/2}t\begin{bsmallmatrix}1&&y&z\\&1&x&y\\&&1\\&&&1\end{bsmallmatrix} )  \nonumber \\
    &=& \frac{q-1}{q^3|T|} \sum\limits_{t\in (T\setminus Z)\cap \Sp(4, q)} \sum\limits_{x,y,z\in \Fq} \chi(t)^{-1} \psi(-a x -by-cz)\theta( t\begin{bsmallmatrix}1&&y&z\\&1&x&y\\&&1\\&&&1\end{bsmallmatrix} )  \nonumber \\
    &=& \begin{cases}
   \frac{1}{q(q-1)} \sum\limits_{i=1}^{q-2} \chi^{-1}(\gamma^i) \left[ \theta (C_2(i)  ) - \theta (D_2(i) )  \right]   & \text{ if } \varepsilon(acb^{-2})=1 \\
   \frac{1}{q(q+1)} \sum\limits_{i=1}^q \chi^{-1}(\eta^i) \left[ \theta (C_4(i)  ) - \theta (D_4(i) )  \right]  & \text{ if } \varepsilon(acb^{-2})=-1. 
    \end{cases}
    \end{eqnarray} The value of $S_2$ can now readily be found from the character tables in \cite{Enomoto1972}.

%
 \end{proof}

\begin{corollary} \label{corollary}
Let $(\sigma, V_\sigma)$ be an irreducible representation of $\GSp(4, q)$, and $a,b,c\in \Fq$ with $b^2-4ac\neq 0$. Let $R=TN$ be the associated Bessel subgroup and $(\chi\otimes \psi_{a,b,c},  \C_{\chi, a,b,c})$ a one-dimensional representation of $R$. 

a) Suppose $T\cong \Fq^\times \times \Fq^\times$. If $\sigma$ is generic then \begin{equation*}
1\leq \dim\Hom_R(V_\sigma, \C_{\chi, a,b,c})\leq 2
\end{equation*} and if $\sigma$ is nongeneric then \begin{equation*}
 \dim\Hom_R(V_\sigma, \C_{\chi, a,b,c})\leq 1.
\end{equation*} In addition, if $\sigma$ is nongeneric then $\dim\Hom_R(V_\sigma, \C_{\chi, a,b,c})= 1$ for at most one $\chi$, except for $\chi_7(k)$ (when $q$ is even) and $\chi_1(\lambda, \nu)$ (when $q$ is odd), in which case there are two such $\chi$. 

b) If $T\cong \mathbb{F}_{q^2}^\times$  then  $$\dim\Hom_R(V_\sigma, \C_{\chi, a,b,c})\leq 1.$$ In addition, if $\sigma$ is nongeneric then $\dim\Hom_R(V_\sigma, \C_{\chi, a,b,c})= 1$ for at most one $\chi$, except for $\chi_{9}(k)$ (when $q$ is even) and $\chi_7(\Lambda)$ (when $q$ is odd) in which case there are two such $\chi$. 
\end{corollary}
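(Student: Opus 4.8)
The plan is to deduce everything from the explicit dimension formulas of Theorem~\ref{Main Theorem} (Tables~\ref{Bessel odd} and~\ref{Bessel even}), organized according to the structure of $T$ and the genericity of $\sigma$, with the counting assertions streamlined by a single auxiliary identity. The first step is to record that for fixed $(a,b,c)$ the space $\Hom_N(V_\sigma,\C_{a,b,c})$ carries a $T$-action, since $T$ normalizes $N$ and fixes $\psi_{a,b,c}$, and that restriction in stages gives
\[
\dim\Hom_R(V_\sigma,\C_{\chi,a,b,c}) = \dim\Hom_T\big(\Hom_N(V_\sigma,\C_{a,b,c}),\C_\chi\big).
\]
Summing over all characters $\chi$ of $T$, and using that (as already noted in the proof of Theorem~\ref{Main Theorem}) the dimension vanishes unless $\chi|_Z=\omega$, yields the identity
\[
\sum_{\chi}\dim\Hom_R(V_\sigma,\C_{\chi,a,b,c}) = \dim\Hom_N(V_\sigma,\C_{a,b,c}),
\]
whose right-hand side is computed in Proposition~\ref{theta2psiabcprop} and depends only on whether $b^2-4ac$ is a square. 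This reduces every multiplicity count in the corollary to the entries of Table~\ref{GSp4qoddtable} (or~\ref{GSp4qeventable}).

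Next I would establish the pointwise bounds by inspecting the tables of Theorem~\ref{Main Theorem}. In the split case $T\cong\Fq^\times\times\Fq^\times$ one reads off that every entry $\dim\Hom_R(V_\sigma,\C_{\chi,a,b,c})$ lies in $\{0,1,2\}$, that the value $2$ occurs only for generic $\sigma$ (so nongeneric $\sigma$ give values in $\{0,1\}$), and that for generic $\sigma$ every entry is $\geq 1$; together these give the two-sided bound $1\leq\dim\Hom_R\leq 2$ of part~a) and the bound $\dim\Hom_R\leq 1$ for nongeneric $\sigma$. In the nonsplit case $T\cong\mathbb{F}_{q^2}^\times$ one similarly checks that all entries lie in $\{0,1\}$, which is exactly the bound of part~b).

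Finally, the multiplicity-one-over-$\chi$ statements follow by combining the identity above with the previous step. For nongeneric $\sigma$ each summand is $0$ or $1$, so the number of $\chi$ with $\dim\Hom_R(V_\sigma,\C_{\chi,a,b,c})=1$ equals $\dim\Hom_N(V_\sigma,\C_{a,b,c})$; reading this value off Proposition~\ref{theta2psiabcprop} shows it is at most $1$ for every nongeneric $\sigma$ except for the families $\chi_7(k)$, $\chi_1(\lambda,\nu)$ in the split case and $\chi_9(k)$, $\chi_7(\Lambda)$ in the nonsplit case, for which it equals $2$. I expect the main obstacle to be bookkeeping rather than conceptual: one must correctly match the conjugacy-class labels $D_0,D_1,F_0,F_1$ (respectively $C_2,D_2,C_4,D_4$) against Shinoda's and Enomoto's character tables for every irreducible family, verify that the entry $2$ never appears for a nongeneric $\sigma$, and in particular pin down precisely which representations are generic — the last point underlying both the lower bound $\dim\Hom_R\geq 1$ in the generic case and the clean dichotomy between the generic and nongeneric rows of the tables.
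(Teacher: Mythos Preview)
Your proposal is correct. In the paper the corollary is stated without proof, the implicit argument being direct inspection of Tables~\ref{Bessel odd} and~\ref{Bessel even}; your approach is the same inspection, with the added identity $\sum_\chi \dim\Hom_R(V_\sigma,\C_{\chi,a,b,c})=\dim\Hom_N(V_\sigma,\C_{a,b,c})$ (valid because $T$ acts on $\Hom_N(V_\sigma,\C_{a,b,c})$ and the $\chi^{-1}$-isotypic piece is exactly $\Hom_R(V_\sigma,\C_{\chi,a,b,c})$) used to reduce the ``how many $\chi$'' count to the rank-$2$ columns of Tables~\ref{GSp4qoddtable} and~\ref{GSp4qeventable}, which is a clean shortcut the paper does not spell out. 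The only caution is that reading off the bounds $\{0,1,2\}$ in the split column and $\{0,1\}$ in the nonsplit column of Tables~\ref{Bessel odd} and~\ref{Bessel even} does require invoking the parameter restrictions in \cite{Shinoda1982} and \cite{Enomoto1972} (e.g.\ that the four characters $\nu,\nu\mu,\nu\lambda,\nu\mu\lambda$ for $X_1$ are pairwise distinct, or that $\Lambda\neq\Lambda^q$ for $X_2$), exactly as you anticipate in your final paragraph.
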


\begin{table}
 \caption{The irreducible characters of $\GSp(4,q)$ for $q$ odd, as listed in \cite{Shinoda1982}. If $(\sigma,V_\sigma)$ is the representation in the first column, then the ``rank $0$'' column shows $\dim\Hom_N(V_\sigma,\C_{0,0,0})$. The ``rank $1$'' column shows $\dim\Hom_N(V_\sigma,\C_{a,b,c})$ if $b^2-4ac=0$ and $(a,b,c)\neq(0,0,0)$. The ``rank $2$ square'' column shows $\dim\Hom_N(V_\sigma,\C_{a,b,c})$ if $b^2-4ac\in\Fq^{\times2}$. The ``rank $2$ non-square'' column shows $\dim\Hom_N(V_\sigma,\C_{a,b,c})$ if $b^2-4ac\in\xi\Fq^{\times2}$. The last two columns indicate the cuspidal and generic representations.}\vspace{-1ex}\label{GSp4qoddtable}
$$
 \begin{array}{ccccccccc}
  \toprule
   \sigma&\dim V_\sigma&\text{rank }0&\text{rank }1&\multicolumn{2}{c}{\text{rank }2}&\text{cusp}&\text{gen}\\
   &&&&\text{square}&\text{non-square}&\\
  \toprule
   X_1(\lambda,\mu, \nu)&(q+1)^2(q^2+1)&4(q+1)&q+3&q+3&q+1&&\bullet\\
  \midrule
   X_2(\Lambda, \nu)&q^4-1&2(q-1)&q-1&q+1&q-1&&\bullet\\
  \midrule
   X_3(\Lambda,\nu)&q^4-1&0&q+1&q-1&q+1&&\bullet\\
  \midrule
   X_4(\Theta)&(q^2-1)^2&0&q-1&q-1&q+1&\bullet&\bullet\\
  \midrule
   X_5(\Lambda,\omega)&(q-1)^2(q^2+1)&0&q-1&q-1&q-3&\bullet&\bullet\\
  \midrule
   \chi_1(\lambda, \nu)&(q+1)(q^2+1)&2(q+1)&1&2&0\\
  \midrule
   \chi_2(\lambda, \nu)&q(q+1)(q^2+1)&2(q+1)&q+2&q+1&q+1&&\bullet\\
  \midrule
   \chi_3(\lambda, \nu)&(q+1)(q^2+1)&q+3&2&1&1\\
  \midrule
   \chi_4(\lambda, \nu)&q(q+1)(q^2+1)&3q+1&q+1&q+2&q&&\bullet\\
  \midrule
   \chi_5(\omega, \nu)&(q-1)(q^2+1)&q-1&0&1&1\\
  \midrule
   \chi_6(\omega, \nu)&q(q-1)(q^2+1)&q-1&q-1&q&q-2&&\bullet\\
  \midrule
   \chi_7(\Lambda)&(q-1)(q^2+1)&0&1&0&2\\
  \midrule
   \chi_8(\Lambda)&q(q-1)(q^2+1)&0&q&q-1&q-1&&\bullet\\
  \midrule
   \tau_1(\nu)&q^2+1&2&1&0&0\\
  \midrule
   \tau_2(\nu)&q(q^2+1)&q+1&1&1&1\\
  \midrule
   \tau_3(\nu)&q^2(q^2+1)&2q&q&q+1&q-1&&\bullet\\
  \midrule
   \tau_4(\lambda')&q^2-1&0&1&0&0\\
  \midrule
   \tau_5(\lambda')&q^2(q^2-1)&0&q&q-1&q+1&&\bullet\\
  \midrule
   \theta_1(\nu)&\frac12q(q+1)^2&q+1&1&1&0\\
  \midrule
   \theta_2(\nu)&\frac12q(q-1)^2&0&0&0&1&\bullet&\\
  \midrule
   \theta_3(\nu)&\frac12q(q^2+1)&1&1&0&1\\
  \midrule
   \theta_4(\nu)&\frac12q(q^2+1)&q&0&1&0\\
  \midrule
   \theta_5(\nu)&q^4&q&q&q&q&&\bullet\\
  \midrule
   \theta_0(\nu)&1&1&0&0&0\\
  \bottomrule
 \end{array}
$$
\end{table}

\begin{table}
 \caption{The irreducible characters of $\GSp(4,q)$ for $q$ odd, as listed in \cite{Shinoda1982}. If $(\sigma,V_\sigma)$ is the representation in the first column, the remainder shows $\dim\Hom_R(V_\sigma,\C_{\chi, a,b,c})$ assuming $\chi|_Z = \omega_\sigma$. In the square column, in which $b^2-4ac\in \Fq^{\times 2}$, 
 we write $\chi_+= \chi|_{T_+}$ and use the canonical isomorphism $T_+\cong \Fq^\times$ to identify $\chi_+$ with a character of $\Fq^\times$. In the square column the inner product is for characters of $\Fq^\times$ while in the nonsplit column it is for $\mathbb{F}_{q^2}^\times$. By $N:\mathbb{F}_{q^2}^\times \to \Fq^\times$ we denote the norm map.  }\vspace{-1ex}\label{Bessel odd}
$$
 \begin{array}{ccc}
  \toprule
   \sigma& \text{square} &\text{nonsquare} \\
  \toprule
   X_1(\lambda,\mu, \nu)& 1+(\chi_+, \nu+\nu\mu+\nu\lambda+\nu\mu\lambda)  & 1\\
  \midrule
   X_2(\Lambda, \nu)& 1+(\chi_+, \nu+\nu\Lambda|_{\Fq^\times}) & 1-(\chi, \Lambda(\nu\circ N) + \Lambda^q (\nu\circ N))\\
  \midrule
   X_3(\Lambda,\nu)& 1&1 \\
  \midrule
   X_4(\Theta)&1 & 1\\
  \midrule
   X_5(\Lambda,\omega)&1 & 1-(\chi, \Lambda + \Lambda^q + \Lambda\tilde{\omega}+\Lambda^q\tilde{\omega}^q) \\
  \midrule
   \chi_1(\lambda, \nu)& (\chi_+, \nu+\nu\lambda) & 0 \\
  \midrule
   \chi_2(\lambda, \nu)&1+(\chi_+, \nu+\nu\lambda)  &1 \\
  \midrule
   \chi_3(\lambda, \nu)& (\chi_+, \nu\lambda) & (\chi, (\lambda\nu)\circ N) \\
  \midrule
   \chi_4(\lambda, \nu)& 1+(\chi_+, \nu+\nu\lambda+\nu\lambda^2) & 1- (\chi, (\lambda\nu)\circ N)\\
  \midrule
   \chi_5(\omega, \nu)& (\chi_+, \nu) & (\chi, \nu\circ N) \\
  \midrule
   \chi_6(\omega, \nu)& 1+(\chi_+,\nu)& 1-(\chi, \nu\circ N + \tilde{\omega}\nu\circ N + \tilde{\omega}^q\nu\circ N) \\
  \midrule
   \chi_7(\Lambda)& 0& (\chi, \Lambda+\Lambda^q) \\
  \midrule
   \chi_8(\Lambda)&1  & 1-(\chi, \Lambda+\Lambda^q) \\
  \midrule
   \tau_1(\nu) & 0 & 0\\
  \midrule
   \tau_2(\nu) & (\chi_+, \nu) & (\chi, \nu\circ N) \\
  \midrule
   \tau_3(\nu )& 1+(\chi_+, \nu+\nu\alpha_0) & 1-(\chi, \nu\circ N + (\nu\alpha_0)\circ N)\\
  \midrule
   \tau_4(\lambda')& 0 & 0\\
  \midrule
   \tau_5(\lambda')& 1 & 1\\
  \midrule
   \theta_1(\nu)& (\chi_+, \nu) & 0\\
  \midrule
   \theta_2(\nu)& 0  & (\chi, \nu\circ N) \\
  \midrule
   \theta_3(\nu)&  0& (\chi, \nu\circ N)\\
  \midrule
   \theta_4(\nu)& (\chi_+, \nu) & 0 \\
  \midrule
   \theta_5(\nu)& 1+(\chi_+, \nu) & 1-(\chi, \nu\circ N) \\
  \midrule
   \theta_0(\nu)& 0 & 0 \\
  \bottomrule
 \end{array}
$$
\end{table}

\begin{table}
 \caption{The irreducible characters of $\GSp(4,q)$ for $q$ even. Shown are the irreducible characters of $\Sp(4,q)$ as determined in \cite{Enomoto1972}. The irreducible characters of $\GSp(4,q)$ follow from $\GSp(4,q)=\Fq^\times\times\Sp(4,q)$. If $(\sigma,V_\sigma)$ is the representation in the first column, then colums 3, 4, 5, 6 show $\dim\Hom_N(V_\sigma,\C_{a,b,c})$ under the indicated conditions. The last columns indicate the cuspidal and generic representations.}\label{GSp4qeventable}
$$
 \begin{array}{ccccccccc}
  \toprule
   \sigma&\dim V_\sigma&\multicolumn{2}{c}{b=0}&\multicolumn{2}{c}{b\neq0}&\!\!\text{cusp}\!&\!\text{gen}\!\!\\
   &&\!(a,c)=(0,0)\!&\!\!(a,c)\neq(0,0)\!\!&\!\varepsilon(\frac{ac}{b^2})=1\!&\!\varepsilon(\frac{ac}{b^2})=-1\!&\\
  \toprule
   \theta_0&1&1&0&0&0\\
  \midrule
     \theta_1&\frac12q(q+1)^2&q+1&1&1&0\\
  \midrule
   \theta_2&\frac12q(q^2+1)&1&1&0&1\\
  \midrule
   \theta_3&\frac12q(q^2+1)&q&0&1&0\\
  \midrule
   \theta_4&q^4&q&q&q&q&&\bullet\\
  \midrule
   \theta_5&\frac12q(q-1)^2&0&0&0&1&\bullet&\\
  \midrule
   \chi_1(k,l)&(q+1)^2(q^2+1)&4(q+1)&q+3&q+3&q+1&&\bullet\\
  \midrule
   \chi_2(k)&q^4-1&2(q-1)&q-1&q+1&q-1&&\bullet\\
  \midrule
   \chi_3(k,l)&q^4-1&0&q+1&q-1&q+1&&\bullet\\
  \midrule
   \chi_4(k,l)&(q-1)^2(q^2+1)&0&q-1&q-1&q-3&\bullet&\bullet\\
  \midrule
   \chi_5(k)&(q^2-1)^2&0&q-1&q-1&q+1&\bullet&\bullet\\
  \midrule
   \chi_6(k)&(q+1)(q^2+1)&q+3&2&1&1\\
  \midrule
   \chi_7(k)&(q+1)(q^2+1)&2(q+1)&1&2&0\\
  \midrule
   \chi_8(k)&(q-1)(q^2+1)&q-1&0&1&1\\
  \midrule
   \chi_9(k)&(q-1)(q^2+1)&0&1&0&2\\
  \midrule
   \chi_{10}(k)&q(q+1)(q^2+1)&3q+1&q+1&q+2&q&&\bullet\\
  \midrule
   \chi_{11}(k)&q(q+1)(q^2+1)&2(q+1)&q+2&q+1&q+1&&\bullet\\
  \midrule
   \chi_{12}(k)&q(q-1)(q^2+1)&q-1&q-1&q&q-2&&\bullet\\
  \midrule
   \chi_{13}(k)&q(q-1)(q^2+1)&0&q&q-1&q-1&&\bullet\\
  \bottomrule
 \end{array}
$$
\end{table}

\begin{table}
 \caption{The irreducible characters of $\GSp(4,q)$ for $q$ even. Shown are the irreducible characters of $\Sp(4,q)$ as determined in \cite{Enomoto1972}. The irreducible characters of $\GSp(4,q)$ follow from $\GSp(4,q)=\Fq^\times\times\Sp(4,q)$. If $(\sigma,V_\sigma)$ is the representation in the first column, the next columns give $\dim\Hom_R(V_\sigma,\C_{\chi, a,b,c})$ assuming $\chi|_Z = \omega_\sigma$.  If $\varepsilon(acb^{-2})=1$ then we identify $T\cap \Sp(4, q)$ with $\langle \gamma \rangle$, define the index $j$ by $\chi(\gamma) = \zeta_{q-1}^j$, and $\delta_{y,z}$ denotes the Kronecker delta over $\Z/(q-1)\Z$. If $\varepsilon(acb^{-2})=-1$ then we identify $T\cap \Sp(4, q)$ with $\langle \eta \rangle$, define the index $j$ by $\chi(\gamma) = \zeta_{q+1}^j$, and $\delta_{y,z}$ denotes the Kronecker delta over $\Z/(q+1)\Z$.}\vspace{-1ex} \label{Bessel even}
$$
 \begin{array}{ccc}
  \toprule
   \sigma& \varepsilon(acb^{-2})=1 & \varepsilon(acb^{-2})=-1\\
  \toprule
   \theta_0  & 0 & 0 \\
  \midrule
   \theta_1& \delta_{j,0}  & 0 \\
  \midrule
   \theta_2& 0 & \delta_{j,0} \\
  \midrule
   \theta_3& \delta_{j,0} & 0 \\
  \midrule
   \theta_4& 1+\delta_{j,0} & 1-\delta_{j,0} \\
  \midrule
   \theta_5& 0 & \delta_{j,0} \\
  \midrule
   \chi_1(k,l)& 1+\delta_{j, k\pm l}+\delta_{j, -k\pm l} & 1\\
  \midrule
   \chi_2(k)& 1+\delta_{j, \pm k} & 1-\delta_{j,\pm k} \\
  \midrule
   \chi_3(k,l)& 1& 1\\
  \midrule
   \chi_4(k,l)&1 & 1-\delta_{j, k\pm l}-\delta_{j, -k\pm l}\\
  \midrule
   \chi_5(k)&1 & 1\\
  \midrule
   \chi_6(k)& \delta_{j,0} & \delta_{j,0} \\
  \midrule
   \chi_7(k)& \delta_{j, \pm k} & 0\\
  \midrule
   \chi_8(k)&\delta_{j,0} &  \delta_{j,0}\\
  \midrule
   \chi_9(k)& 0 & \delta_{j, \pm k } \\
  \midrule
   \chi_{10}(k)& 1+\delta_{j,0}+\delta_{j, \pm k} & 1-\delta_{j,0} \\
  \midrule
   \chi_{11}(k)&1+\delta_{j,\pm k} & 1\\
  \midrule
   \chi_{12}(k)& 1+\delta_{j,0}& 1-\delta_{j,0}-\delta_{j, \pm 2k} \\
  \midrule
   \chi_{13}(k)& 1 & 1- \delta_{j, \pm k} \\
  \bottomrule
 \end{array}
$$
\end{table}

\end{document}